\newtheorem{theorem}{Theorem}
\newtheorem{definition}{Definition}
\newtheorem{example}{Example}
\newtheorem{lemma}{Lemma}
\newtheorem{notation}{Notation}
\newtheorem{remark}{Remark}
\title{Triangular tensors and set-intersection problems}
\author{Omran Ahmadi$^\dagger$ \and Hassan Norouzi$^*$}
\date{
\vspace*{3mm}
\parbox{\linewidth}{
\centering
\small
 $^\dagger$ School of Mathematics,\\ 
  Institute for Research in Fundamental Sciences (IPM),\\ 
  E-mail: \texttt{oahmadid@ipm.ir}
 \endgraf\medskip
  $^*$ School of Mathematics,\\ 
  Institute for Research in Fundamental Sciences (IPM),\\
  E-mail: \texttt{norouzi@ipm.ir}
  \endgraf\medskip
  \today
  }
}
\begin{document}
\maketitle

\begin{abstract}
In the past few years, the slice-rank lemma of Tao has been applied successfully to many problems in extremal combinatorics. In this paper, first, we define a new notion of triangular tensors which generalizes that of triangular matrices (2-tensors), and prove a lemma similar to the slice-rank lemma for them. Then, applying the slice-rank framework with triangular matrices, we give new and shorter proofs for some well-known theorems on set-intersections like Frankl-Wilson and Snevily with modular constraints, and some of the more recent set-intersection results. We also improve Snevily with modular constraints in some special cases. Finally, using Snevily's theorem with some combinatorial lemmas, we give new bounds on some generalizations of the reverse odd-town problem. 
\end{abstract}

\section{Introduction}
In the extremal set theory, one is usually interested in finding the minimum or maximum number of subsets of a finite set subject to some constraints. Pioneering results in the extremal set theory were obtained in the first half of the twentieth century.  To name some of them,  one can include the results of Mantel~\cite{Mantel}, Sperner~\cite{Sperner}, Tur\'an~\cite{Turan} and Erd\H{o}s-Ko-Rado~\cite{Erdos-Ko-Rado} (EKR for short), though the results of Mantel and Tur\'an are usually stated in the language of graphs rather than sets. Among these four results, the EKR theorem, which states that the maximum number of $r$-element subsets of an $n$-element set with pairwise non-empty intersections for $r \leq n/2$ is $\binom{n-1}{r-1}$, belongs to a family of extremal results called set-intersection results. 

In set-intersection problems, one is usually interested in finding the maximum number of subsets of a finite set subject to some restrictions on their intersections. The EKR theorem, since its publication in 1961 (originally proved in 1938), has been inspirational to the extent that the results and proof techniques on the set-intersection problems have grown to a large body of work~\cite{Frankl-book,Frankl-survey-JCTA-2016}. 

In the most general version of the set-intersection problems, one is given an $n$-element set $S$, for example $[n]:=\{1,2,\ldots,n\}$, a collection $L_1,\ldots,L_k$  of  $k$ non-empty subsets of $[n]$, and looks for the  greatest $m$ for which there is a family  \( \mathcal{F} = \{A_1, \ldots, A_m\} \) of $m$ subsets of $S$ such that 
\[
    |A_{r_1} \cap \cdots \cap A_{r_i}|  \in L_i 
\]
for all  \( 1 \le r_1 < \cdots < r_i \le m \) with \( 1 \le i \leq k\) where $|\cdot|$ denotes the number of elements of a set. Usually, when one is concerned with the remainders of the sizes of the sets in $\mathcal{F}$ and their intersections modulo a fixed prime number $p$ and not their actual sizes,  $|L_1|\pmod{p},\ldots,|L_k|\pmod{p}$ are important and not the absolute values $|L_1|,\ldots,|L_k|$. Thus, in these cases, called modular variants, without loss of generality,  $L_1,\ldots,L_k$ may be taken to be $k$ non-empty subsets of $\mathbb{Z}_p$.

Depending on $L_1,\ldots,L_k$, the corresponding extremal problem may be trivial or may be very challenging and interesting. For example, in EKR, $k=2$, $L_1=\{r\}$ and $L_2=\{1,2,\ldots,r-1\}$. Some of the well-known results on the pairwise intersections and hence $k=2$ are:  odd-town theorem conjectured by Erd\H{o}s and settled by Berlekamp~\cite{Berlekamp} where $L_1=\{1\}\subset \mathbb{Z}_2$ and $L_2=\{0\}\subset \mathbb{Z}_2$, reverse odd-town theorem where $L_1=\{0\}\subset \mathbb{Z}_2$ and $L_2=\{1\}\subset \mathbb{Z}_2$ (see Exercise~1.1.5 in~\cite{BabaiFrankl1992}), the Frankl-Wilson theorem~\cite{FranklWilson1981} where $L_1=[n]$ and $L_2$ is a set of $s$ arbitrary non-negative integers not greater than $n$, and the theorem of Snevily with modular constraints~\cite{Snevily1} where again $S=[n]$ and  $L_1$ and $L_2$  are two disjoint subsets of $\mathbb{Z}_p$ for some prime number $p$. 

The proof techniques which have been developed to attack set-intersection problems now include algebraic, combinatorial and probabilistic among others.  One of the relatively recent added linear-algebraic tools to tackle the extremal combinatorics problems is the so-called {\it{slice-rank method}}. The slice-rank method has been very successful in obtaining better bounds for some notoriously hard problems. To give a brief account of the method, suppose $A$ is a finite set whose size we wish to bound and $F$ is a field. In the slice-rank method, we first write a diagonal $k$-tensor $T$ where $T$ is a function from $A^k$ to $F$ with the property that $T(a_1, \ldots, a_k) \neq 0$ only when $a_1 = a_2 = \cdots = a_k$. Then, as the size of $A$ is equal to the slice-rank of $T$ by the so-called slice-rank lemma, we bound the slice-rank of $T$ which in some precise sense generalizes the notion of the rank of the matrices (2-tensors). 

Having the slice-rank method and lemma for diagonal tensors, one may wonder whether it is possible to extend the method and lemma to the other types of tensors. To answer this question, the most natural place to look at, is the case of 2-tensors, i.e., matrices, and actually there is a fact about matrices that inspired us to extend the lemma to the other types of tensors in this paper. The fact that has been our starting point in our research in this paper is the fact that triangular matrices with non-zero diagonal entries are full-rank. This fact led us to define the new notion of triangular tensors which generalizes that of triangular matrices (2-tensors), and prove a lemma similar to the slice-rank lemma for them when the ground set $A$ is totally ordered. 

After having the new notion of triangular tensors and slice-rank lemma for them, naturally, one may look for extremal combinatorics problems that can be attacked using this new slice-rank lemma. Though, we have been unable to solve any extremal combinatorics problem using triangular tensors of order greater than two so far, to our surprise there are some well-known extremal combinatorics problems that can be attacked using triangular 2-tensors, and it seems that applying this method to them has been overlooked. For example, using this method, we give new and shorter proofs for some well-known theorems on set-intersections like Frankl-Wilson and Snevily with modular constraints, and some of the more recent set-intersection results of Liu~\emph{et al.}~\cite{liu2024snevily} and Heged{\"u}s~\cite{hegedus2024upper}. 

Among the four theorems that we give new proofs, the theorem of Snevily with modular constraints is a generalization of both odd-town and reverse odd-town theorems though in the latter case it gives a slightly weaker bound. In this paper, using some combinatorial arguments, we improve Snevily with modular constraints so that it gives the exact bound of reverse odd-town theorem. Then, we use this new theorem with some combinatorial lemmas to give new bounds on some generalizations of the reverse odd-town problem. In the generalization of reverse odd-town problem that we consider, with the notations as above, we fix a prime number $p$ and take $L_1=\cdots=L_{k-1}=\{0\}$ and let $L_k$ be a symmetric subset of $\mathbb{Z}_p$. Our results in this case, ~\cref{Generalized-reverse-odd-town,sharper}, improve the bounds given in~\cite{ONeill2025}. 

The remainder of this paper is organized as follows. We start~\cref{Pre} by listing required definitions and notation, then we give more details on the slice-rank method, and finally conclude the section by presenting our preliminary results. In~\cref{Main}, we present our main results including new proofs of the results mentioned above, a new result improving the modular variant of Snevily's theorem in some special cases, and finally our new bound for some generalizations of the reverse odd-town problem.

\section{Preliminaries}\label{Pre}
\subsection{Notation and definitions}
In this section, we gather some notation and definitions that will be used throughout the paper. First we notice that throughout the paper, small letters are used to denote the elements of a set, capital letters are used to denote a set, and calligraphic letters are reserved to denote a family of sets. Also, throughout the paper, boldface letters represent vectors.  

\begin{notation}
    We denote the set $\{1,2,\ldots,n\}$ by $[n]$, and the set of all subsets of $[n]$ by $2^{[n]}$.
\end{notation}

\begin{definition}
A vector $\boldsymbol{v}$ in a vector space with entries  equal to $0$ or $1$ is called a $\{0,1\}$-vector. 
\end{definition}

\begin{definition}
   The characteristic vector $\boldsymbol{v}_A\in \{0,1\}^n$ of a subset $A$ of $[n]$ is the $\{0,1\}$-vector whose $i$-th entry is $1$ if $i\in A$ and $0$ otherwise. Furthermore, the $i$-th entry of the characteristic vector of a set $A$ is denoted by $\boldsymbol{v}_{A}^{i}$.
\end{definition}

\begin{definition}
    The Hamming weight or simply the weight $w(\boldsymbol{v})$ of a vector $\boldsymbol{v}\in\{0,1\}^n$ is the number of its non-zero entries.
\end{definition}
%

\begin{notation}
    For a given set $L \subset \mathbb{Z}_p$, $-L:=\{-x \mid x \in L\}$.
\end{notation}

\begin{definition} Let $L_1,L_2,\ldots,L_k$ be a collection of subsets of  $[n]$ or a collection of subsets of $\mathbb{Z}_p$. Then a $k$-intersection profile is a $k$-tuple $\alpha=(L_1,L_2,\ldots,L_k)$. Whenever $L_i=\{\alpha_i\}$ for some $i$,  we write $\alpha=(L_1,\ldots,\alpha_i,\ldots,L_k)$ instead.
\end{definition}

\begin{definition}[{$[n,p,\alpha]$}-family]
Let  \( \alpha = (L_1, \ldots, L_k) \) be a $k$-intersection profile where $L_i\subseteq\mathbb{Z}_p$ for every $i$.  
A family  \( \mathcal{F} = \{A_1, \ldots, A_m\} \) of subsets of $[n]$ is called an \( [n, p, \alpha] \)-family if for all \( 1 \le r_1 < \cdots < r_i \le m \) with \( 1 \le i \leq k \), we have
\[
    |A_{r_1} \cap \cdots \cap A_{r_i}|\pmod{p} \in L_i . 
\]

\end{definition}

Note that by the above definitions it is implied that if $\alpha$ is a $k$-intersection profile and $\mathcal{F}$ is an $[n,p,\alpha]$-family, then $|\mathcal{F}|\ge k$.

\subsection{Slice-rank method}
The slice-rank method is a polynomial method rooted in a breakthrough paper by Lev, Croot and Pach~\cite{croot_progression-free_2016} where it was shown that for an integer \( n \geq 1 \) and any subset \( A \subseteq \mathbb{Z}_4^n \) free of non-trivial three-term arithmetic progressions one has \( |A| \leq 4^{\gamma n} \) for an absolute constant\( \gamma \approx 0.926 \). Shortly after the appearance of~\cite{croot_progression-free_2016} on ArXiv, Ellenberg and Gijswijt~\cite{ellenberg2016large} used the method of Lev, Croot and Pach to show that if $A$ is a subset of the $n$-dimensional vector space $\mathbb{F}_3^n$ over the ternary field $\mathbb{F}_3$ containing no non-trivial three-term arithmetic progression, then \(|A| = o(2.756^n). \)
Finding the largest subset of $\mathbb{F}_3^n$ with no three terms in arithmetic progression is the cap-set problem. Inspired by these developments, Tao~\cite{Tao-symmetric-reformulation} and later Tao and Sawin~\cite{tao_notes_2016} reformulated the method of Croot, Lev and Pach making it available for application to a wider range of problems. The reformulation of Tao and Sawin is currently known as the slice-rank method which is the subject of our short exposition in the sequel. 

Let $A$ be a finite set and $F$ be a field. A $k$-tensor $T$ is a function from $A^k$ to $F$.
A $k$-tensor $T$ is called diagonal if $T(a_1, \ldots, a_k) \neq 0$ only when $a_1 = a_2 = \cdots = a_k$, and it is called {\it{slice}} if it can be decomposed as \(T = f(a_j) g(a_1,\ldots,\hat{a_j},\ldots,a_k)\) for some $j$ in $[k]$, where as usual the notation \(g(a_1,\ldots,\hat{a_j},\ldots,a_n)\) or simply $g(\hat{a_j})$ means that the function $g$ does not depend on $a_j$. The important notion of the slice rank of a $k$-tensor is as follows. 
\begin{definition}[Slice Rank]
    Let $T\colon A^k \longrightarrow F$ be a $k$-tensor. The slice rank of  $T$ denoted by $sr(T)$ is the minimum $r$ such that $T$ can be written as the sum of $r$ slices, i.e., $sr(T)$ is the minimum number $r$ such that
    \[
        T= \sum_{i=1}^r f_i(a_{j_i}) g_i(\hat{a_{j_i}})
    \]
    for some $f_i\colon A \rightarrow F$ and $g_i\colon A^{k-1} \rightarrow F$.
    
\end{definition}
\begin{example}\label{innerproduct-slice rank}
Let $V$ be a subset of $\{0,1\}$-vectors of an $m$-dimensional vector space over the field $F$, and let $l\le m$. Then the slice rank of the $2$-tensor \[T(\boldsymbol{v}_1,\boldsymbol{v}_2)=\prod_{i=1}^l \left((\sum_{j=1}^m x_j y_j)+ f_i(\boldsymbol{v}_1) \right)\] 
where $\boldsymbol{v}_1=(x_1,\ldots,x_m)$ and $\boldsymbol{v}_2=(y_1,\ldots,y_m)$ is at most $\sum_{i=0}^{l} \binom{m}{i}$. To see this, first we notice that since $\boldsymbol{v}_2$ is a $\{0,1\}$-vector, we have $y_j^n=y_j$ for every $j$ and $n\ge 1$. Hence if we let $y_S=\prod_{t\in S}y_t$ for $S\subseteq [m]$, then expanding the right-hand side of the above equation, we may write
\[T(\boldsymbol{v}_1,\boldsymbol{v}_2)=\sum_{S\subseteq [m],|S|\le l}g_S(\boldsymbol{v}_1)y_S.\] 
Now, as it can be seen from the last equality, each such subset $S$ 
contributes one rank-1 term, and hence the result follows from the fact that 
the number of subsets $S \subseteq [m]$ with $|S| \leq l$ is exactly:
\[
\sum_{i=0}^l \binom{m}{i}.
\]
\end{example}

The following lemma is at the heart of the slice-rank method. 
\begin{lemma}[T. Tao \cite{tao_notes_2016}]
    \label{slicerank}
	Let $A$ be a finite set and $F$ be a field. If a $k$-tensor $T\colon A^k \rightarrow F$ is diagonal, then $sr(T)$ is equal to $|A|$.
\end{lemma}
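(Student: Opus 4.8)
The plan is to prove the two inequalities $sr(T)\le |A|$ and $sr(T)\ge |A|$ separately, assuming (as the statement implicitly requires for the equality to hold) that every diagonal entry $c_a:=T(a,\ldots,a)$ is nonzero. The upper bound is immediate: writing $T(x_1,\ldots,x_k)=\sum_{a\in A} c_a\,\mathbf{1}[x_1=a]\,\mathbf{1}[x_2=\cdots=x_k=a]$ exhibits $T$ as a sum of $|A|$ slices, each cut on the first coordinate, so $sr(T)\le |A|$. All the real work is in the reverse inequality $sr(T)\ge |A|$.

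For the lower bound I would argue by induction on $N=|A|$, the cases $N\le 1$ being trivial. Fix an optimal slice decomposition $T=\sum_{i\in I} f_i(x_{c(i)})\,g_i(\hat x_{c(i)})$ with $|I|=sr(T)=:r$, where $c(i)\in[k]$ records the coordinate that slice $i$ cuts; discarding zero slices we may assume each slice is nonzero. Since $T\ne 0$, some coordinate is cut, and because a diagonal tensor is symmetric under permuting coordinates we may relabel so that $I_1:=\{i:c(i)=1\}\ne\emptyset$. Let $V=\langle f_i:i\in I_1\rangle$ inside the space $F^A$ of functions $A\to F$, and set $d=\dim V$, so $1\le d\le |I_1|$. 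The heart of the argument is a contraction of the first coordinate against the annihilator $V^{\perp}=\{\beta\in F^A:\sum_a\beta_a f_i(a)=0\text{ for all }i\in I_1\}$, a subspace of dimension $N-d$. Choosing a basis of $V^{\perp}$ in reduced row echelon form, I would let $B\subseteq A$ be its set of $N-d$ pivot coordinates and, for each $b\in B$, let $\beta^{(b)}\in V^{\perp}$ be the corresponding echelon basis vector, so that $\beta^{(b)}_{b'}=\delta_{b,b'}$ for all $b,b'\in B$.

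With these functionals in hand, define the contracted tensor $\tilde T$ on $B^{k}$ by $\tilde T(b_1,\ldots,b_k)=\sum_{a\in A}\beta^{(b_1)}_a\,T(a,b_2,\ldots,b_k)$. Three facts then need checking. First, $\tilde T$ is again diagonal with the same nonzero diagonal entries: diagonality of $T$ collapses the sum to $\beta^{(b_1)}_{b_2}c_{b_2}\,\mathbf{1}[b_2=\cdots=b_k]$, and the echelon identity $\beta^{(b_1)}_{b_2}=\delta_{b_1,b_2}$ turns this into $c_{b_1}\,\mathbf{1}[b_1=\cdots=b_k]$. Second, contracting the given decomposition kills every slice in $I_1$ (since $\sum_a\beta^{(b_1)}_a f_i(a)=0$ for $f_i\in V$), while each slice cut on a coordinate $j\ge 2$ stays a slice after the first coordinate is absorbed into the parameter $b_1$; hence $\tilde T$ is a sum of at most $r-|I_1|$ slices, i.e.\ $sr(\tilde T)\le r-|I_1|$. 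Third, $|B|=N-d<N$, so the induction hypothesis applies to $\tilde T$ and gives $sr(\tilde T)=|B|=N-d$. Combining these, $N-d\le r-|I_1|$, whence $r\ge N-d+|I_1|\ge N$ since $|I_1|\ge d$. Together with the upper bound this yields $sr(T)=|A|$.

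I expect the main obstacle to be the contraction step: arranging the echelon functionals $\beta^{(b)}$ so that they simultaneously (a) annihilate all first-coordinate slices, (b) preserve the diagonal structure with unchanged nonzero entries on the smaller index set $B$, and (c) do not increase the number of remaining slices. Once this single linear-algebraic gadget is set up correctly, the induction on $|A|$ closes essentially automatically.
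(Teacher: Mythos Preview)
The paper does not supply its own proof of this lemma; it is stated with attribution to Tao's blog post \cite{tao_notes_2016} and used as a black box. Your argument is correct and is essentially Tao's original proof: the upper bound is the obvious slice decomposition along one coordinate, and the lower bound is the contraction-and-induction argument in which one annihilates the slices cut on a chosen coordinate by pairing against a basis of the orthogonal complement of their span, chosen in echelon form so that the contracted tensor remains diagonal with nonzero entries on a strictly smaller index set. The bookkeeping you describe (checking that the echelon pivots guarantee $\beta^{(b_1)}_{b_2}=\delta_{b_1,b_2}$ on $B$, that the contraction kills the $I_1$-slices while leaving the remaining slices as slices, and that $|I_1|\ge d$ closes the induction) is exactly what is needed and is carried out correctly.
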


\subsection{Triangular $2$-Tensor}
Notice that when $T$ is a tensor of order two, we can think of $T$ as a square matrix of order $|A|$ and it is not that hard to see that the notion of the slice rank coincides with the usual notion of the rank of a matrix. Considering this connection, one may wonder whether it is possible to extend some known facts about the rank of matrices to the slice-rank of tensors. It turns out that indeed this is the case if we consider triangular matrices where we have the fact that the rank of a triangular matrix is at least equal to the number of its non-zero diagonal entries. In order to extend this fact to the tensors, we first need a proper notion of a triangular tensor. This can be done naturally for 2-tensors if we equip $A$ with a total order. 

\begin{definition}\label{diagonal-lower triangular 2-tensor}
Let $F$ be a field, $(A,\le)$ be a totally ordered set, and let $T\colon A^2\longrightarrow F$ be a $2$-tensor. The $2$-tensor $T$ is called lower-triangular if  $T(u,v)=0$ for $u\neq v$ and  $u\le v$. Similarly, an upper-triangular $2$-tensor $T$ is a tensor with the property that $T(u,v)=0$ for $u\neq v$ and $u\ge v$. 
\end{definition}

This definition and the fact that the rank of a triangular matrix with non-zero diagonal entries is equal to its slice-rank leads to the following generalization of \cref{slicerank}.

\begin{lemma}\label{slice-rank-triangular}
Let $(A,\le)$ be a totally ordered finite set and $F$ be a field. If a $2$-tensor $T\colon A^2 \rightarrow F$ is triangular with non-zero diagonal entries, then $sr(T)$ is equal to $|A|$.
\end{lemma}

The above lemma will be our main tool in establishing our main  set-intersection results. 


\subsection{Triangular $k$-Tensor}

In the previous section, we defined the notion of a triangular 2-tensor. One possible way to extend that definition to higher-order tensors is as follows.
\begin{definition}
 Let $(A,\le)$ be a totally ordered finite set, and $F$ be a field. A $k$-tensor $T:A^k\to F$ is $i$-triangular if $T(x_1,\ldots,x_k)=0$ when $(x_1,\ldots,x_k)$ is off-diagonal and $x_i\le x_j$ for all $j\ne i$.    
\end{definition}

\begin{remark}
In \cite{AmanovYeliussizov2021}, Amanov and Yeliussizov have defined the notion of higher-order tensors in $P$-echelon form and have proved some results similar to the slice-rank lemma. Taking a $k$-tensor $T$ on a totally ordered finite set $A$, they equip $[k]$ with a partial order $P$ with connected Hasse diagram and call $T$ to be in $P$-echelon form if $T(x_1,x_2,\ldots,x_k)=0$ unless $x_i\le x_j$ for $i<_Pj$. Their definition and results can be considered as a generalization of the triangular slice-rank lemma for 2-tensors to higher-order tensors though it has not been explicitly stated in their paper. In our definition, we equip $[k]$ with the partial order $Q_i=\{(i,j)\,\vert\, i\le j \;\text{for all}\; j\}$ and define an $i$-triangular tensor. Our definition is different from their definition since they define a $k$-tensor $T$ to be in $Q_i$-echelon form if $T(x_1,x_2,\ldots,x_k)=0$ unless $x_{\alpha}\le x_{\beta}$ for ${\alpha}<_{Q_i}{\beta}$.
\end{remark} 


The following lemma is the slice-rank lemma of $i$-triangular tensors.

\begin{lemma}
Let $(A,\le)$ be a totally ordered finite set, and $F$ be a field. Furthermore, let $T\colon A^k \rightarrow F$ be an $i$-triangular $k$-tensor with non-zero diagonal entries for some $i \in [k]$.  Then:
\begin{itemize}
    \item[(i)] $sr(T)=|A|$ when $k$ is an even number, and
    \item[(ii)] $sr(T) \ge |A|/(p-1)$ when $k$ is an odd number and $F$ is a field of positive characteristic $p$. 
    \end{itemize}
\end{lemma}
\begin{proof}
Let $A=[n]$, and let $1 < 2 < \cdots < n$ be the total order of $[n]$. Without loss of generality we may assume that $T$ is $1$-triangular, i.e.,\ $T(x_1,\ldots,x_k)=0$ if $(x_1,\ldots,x_k)$ is off-diagonal and $x_1 \le x_i$ for all $i>1$.  To prove the claim we need the notion of the hyper-determinant of a tensor. Cayley's first hyper-determinant  for the $k$-tensor $T\colon [n]^k \rightarrow F$ (see~\cite{AmanovYeliussizov2021} for more details on hyper-determinants) is
\[
    \det(T) := \sum_{\sigma_2, \ldots, \sigma_k \in S_n} \text{sgn}(\sigma_2 \cdots \sigma_k) \prod_{i=1}^{n} T(i, \sigma_2(i), \ldots, \sigma_k(i)) 
\]
where as usual $\text{sgn}(\cdot)$ denotes the signature or parity of a permutation. We claim that 
\begin{equation}\label{Diagonal-hyper-det}
\operatorname{det}(T)=\prod_{x\in A=[n]} T(x,\ldots,x).
\end{equation}
In order to prove our claim, it suffices to show that for every collection of $k$ permutations $\sigma_2,\ldots,\sigma_k \in S_n$, if one of them is different from the identity permutation, then 
\begin{equation}\label{0-Tensor}
    \prod_{i=1}^n T(i,\sigma_2(i),\ldots,\sigma_k(i))=0. 
\end{equation}
Suppose at least one of the permutations $\sigma_2,\ldots,\sigma_k \in S_n$ is different from the identity. Then there are some elements not fixed by all the permutations. If we let $m$ be the smallest such an element, then $\sigma_i(m)\ge m$ for every permutation $\sigma_i$. But since $T$ is $1$-triangular, this would imply that 
\[
T(m,\sigma_2(m),\ldots,\sigma_k(m))=0
\]
which in turn implies ~\eqref{0-Tensor}. Since by our assumptions $T$ has non-zero diagonal entries, from~\eqref{Diagonal-hyper-det} we deduce that $\det(T)$ is non-zero. But using Theorems~4.3 and~4.5 of \cite{AmanovYeliussizov2021}, this would imply $\operatorname{sr}(T)=|A|$ and $\operatorname{sr}(T)\ge |A|/(p-1)$ for even $k$ and $F$ being of positive characteristic $p$, respectively. This finishes the proof. 
\end{proof}

\subsection{Preliminary Results}

In this section, we gather some results which will be used later.

\begin{lemma}[Complement Lemma]
    Let $p$ be a prime number dividing $n$, $\alpha = (0, \ldots, 0, L)$ be a $k$-intersection profile where $L = -L$ for $L \subseteq \mathbb{Z}_p$  , and let $\mathcal{F} = \{A_1, \ldots, A_m\}$ be an $[n,p,\alpha]$-family. Furthermore, for any $i$, $1 \leq i \leq m$, let  
    \[\mathcal{F}^i: = (\mathcal{F}-\{A_i\})\cup \{A_i^c\}\]
    where $A_i^c$ denotes the complement of the set $A_i$ in the set $[n]$. Then $\mathcal{F}^i$ is an $[n,p,\alpha]$-family.  Notice that $\mathcal{F}^i$  is obtained by replacing $A_i$ with $A_i^c$.
\end{lemma}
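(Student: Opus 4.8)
The plan is to verify directly that every intersection condition defining an $[n,p,\alpha]$-family continues to hold after replacing $A_i$ by $A_i^c$. Since the only member that changed is $A_i$, the intersections I must check split into two flavors: those not involving $A_i^c$, and those that do. An intersection of members of $\mathcal{F}^i$ not involving $A_i^c$ is literally an intersection of members of $\mathcal{F}$ none of which is $A_i$, so its size already lies in the correct $L_j$ by hypothesis. Thus the entire content of the lemma is to control the intersections that do involve $A_i^c$.

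For such an intersection, write it as $A_i^c \cap B$, where $B = A_{r_1} \cap \cdots \cap A_{r_{j-1}}$ is an intersection of $j-1$ original members of $\mathcal{F}$ distinct from $A_i$, and the total number of sets in the intersection is $j$ with $1 \le j \le k$. The key tool is the elementary identity
\[
    |A_i^c \cap B| = |B| - |A_i \cap B|,
\]
which converts everything back into intersections of original sets, whose residues modulo $p$ are known. I would then split on the value of $j$.

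For $2 \le j \le k-1$, the set $B$ is an intersection of $j-1 \ge 1$ original sets and $A_i \cap B$ is an intersection of $j \le k-1$ original sets, so both sizes are $\equiv 0 \pmod{p}$ and hence $|A_i^c \cap B| \equiv 0 \pmod{p}$, as required since $L_j = \{0\}$. The case $j=1$ is where the hypothesis $p \mid n$ enters: here the intersection is $A_i^c$ itself, and $|A_i^c| = n - |A_i| \equiv 0 - 0 = 0 \pmod{p}$. Finally, for $j = k$ the set $B$ is a $(k-1)$-fold intersection, so $|B| \equiv 0 \pmod{p}$, while $A_i \cap B$ is a $k$-fold intersection, so $|A_i \cap B| \bmod p \in L$. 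The identity then gives $|A_i^c \cap B| \equiv -|A_i \cap B| \pmod{p}$, and since $L = -L$ this residue again lies in $L$.

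The only genuinely substantive points are the two places where the hypotheses are consumed: the divisibility $p \mid n$ is needed precisely to handle the singleton intersection $A_i^c$ (the $j=1$ case), and the symmetry $L = -L$ is needed precisely to handle the top-level $k$-fold intersections, where complementation sends a residue $t \in L$ to $-t$. Everything else is bookkeeping via inclusion--exclusion, so I do not expect any real obstacle beyond organizing the case analysis cleanly.
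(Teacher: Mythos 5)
Your proposal is correct and follows essentially the same route as the paper's own proof: both rest on the identity $|A_i^c \cap B| = |B| - |A_i \cap B|$ and the same three-way case split (the singleton $A_i^c$ using $p \mid n$, the intermediate intersections where both terms vanish mod $p$, and the top-level $k$-fold intersections using $L = -L$). No gaps; your case analysis matches the paper's conditions (i)--(iii) exactly.
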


\begin{proof}
     We first observe that if $A$ and $B$ are any two subsets of $[n]$, then from $|A\cap B|+|A^c\cap B|=|B|$, it follows that if $|B|\equiv 0\pmod p$, then $|A\cap B|\equiv-|A^c\cap B|\pmod p$, and in particular $|A^c\cap B|\equiv 0\pmod p$ if $|B|\equiv|A\cap B|\equiv 0\pmod p$.  

     In order to prove that $\mathcal{F}^i$ is an $[n,p,\alpha]$-family, it suffices to show that:
     \begin{itemize}
         \item[(i)] $|A_i^c|\equiv 0\pmod p$,
         \item[(ii)] $|A_i^c\cap A_{r_1} \cap \cdots \cap A_{r_\ell}|\equiv 0\pmod p$ for any $1 \leq r_1 < \cdots < r_\ell \leq m$ with $l<k-1$ and $r_j\neq i$ for any $j$, $1\leq j\leq l$, and 
         \item[(iii)] $|A_i^c\cap A_{r_1} \cap \cdots \cap A_{r_{k-1}}|  \,\mathrm{mod}\, p \in L$ for any $1 \leq r_1 < \cdots < r_{k-1} \leq m$.     
         \end{itemize}
    But it is straightforward to see that (i), (ii) and (iii) follow from the observation made above by letting $B=[n]$, $B=A_{r_1} \cap \cdots \cap A_{r_\ell}$, and $B=A_{r_1} \cap \cdots \cap A_{r_{k-1}}$, respectively, and using the facts that $n\equiv |A_i|\equiv 0\pmod p$ and $L=-L$. 
\end{proof}
The following is a refinement of the Trace Lemma of O'Neill and Johnston~\cite[Lemma~1]{ONeill2025}.
\begin{lemma}[Trace Lemma]
Let $\alpha = (L_1, \ldots, L_k)$ be a $k$-intersection profile  where $L_t \cap L_{t+1} = \emptyset$  for some $t > 1$, and let $\alpha^\prime=(L_2,\ldots,L_k)$.
If 
\(
\mathcal{F} = \{A_1,\ldots,A_m\} 
\)
is an $[n,p,\alpha]$-family and $n > t$, then 
\[
\mathcal{F}_\gamma = \{ A_i\cap A_{\gamma}|i\neq \gamma\}
\]
is an $[|A_{\gamma}|,p,\alpha^\prime]$-family of size $m-1$.
\end{lemma}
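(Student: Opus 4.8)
The plan is to establish the two claims separately: that $\mathcal{F}_\gamma$ satisfies the intersection constraints of $\alpha' = (L_2,\ldots,L_k)$, and that it has exactly $m-1$ members, i.e. that the sets $A_i\cap A_\gamma$ with $i\neq\gamma$ are pairwise distinct. Throughout I would abbreviate $B_i := A_i\cap A_\gamma$ for $i\neq\gamma$; each $B_i$ is a subset of the ground set $A_\gamma$, which has size $|A_\gamma|$.

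For the family property the one idea needed is that intersecting with $A_\gamma$ raises the order of an intersection by one. For distinct indices $i_1,\ldots,i_j$, all different from $\gamma$, with $1\le j\le k-1$, we have
\[
B_{i_1}\cap\cdots\cap B_{i_j}=A_{i_1}\cap\cdots\cap A_{i_j}\cap A_\gamma,
\]
an intersection of $j+1$ distinct members of $\mathcal{F}$. Since $\mathcal{F}$ is an $[n,p,\alpha]$-family and $1\le j+1\le k$, the size of this set modulo $p$ lies in $L_{j+1}$, which is precisely the $j$-th coordinate of $\alpha'$. Letting $j$ range over $1,\ldots,k-1$ (the case $j=1$ recovering the ``single-set'' constraint $|B_i|\bmod p\in L_2$ from the pairwise constraint of $\mathcal{F}$) shows $\mathcal{F}_\gamma$ is an $[|A_\gamma|,p,\alpha']$-family.

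The substantive step, where the hypotheses $t>1$ and $L_t\cap L_{t+1}=\emptyset$ are used, is showing $|\mathcal{F}_\gamma|=m-1$. Suppose for contradiction that $B_i=B_j$ for some distinct $i,j\neq\gamma$; then $A_i\cap A_\gamma=A_j\cap A_\gamma$, and in particular $A_i\cap A_\gamma\subseteq A_j$. Because $m\ge k\ge t+1$, I can pick $t-2$ further indices $s_1,\ldots,s_{t-2}$ distinct from $i,j,\gamma$ and set $D:=A_{s_1}\cap\cdots\cap A_{s_{t-2}}\cap A_i\cap A_\gamma$. Then $D$ is a $t$-fold intersection of members of $\mathcal{F}$, so $|D|\bmod p\in L_t$, whereas $D\cap A_j$ is a $(t+1)$-fold intersection, so $|D\cap A_j|\bmod p\in L_{t+1}$. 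But $D\subseteq A_i\cap A_\gamma\subseteq A_j$ forces $D\cap A_j=D$, so the common value $|D|\bmod p$ would lie in $L_t\cap L_{t+1}=\emptyset$, a contradiction. Hence the $B_i$ are distinct and $|\mathcal{F}_\gamma|=m-1$.

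The main obstacle is arranging the second step cleanly. One must check that the padding up to a $(t+1)$-fold intersection is legitimate: this uses $t+1\le k$ (so that the constraint set $L_{t+1}$ exists and $\mathcal{F}$ constrains $(t+1)$-fold intersections) and $m\ge k\ge t+1$ (so that $t-2$ spare indices are available), while $t>1$ is what makes the count $t-2$ non-negative. The crucial feature is that $L_t$ and $L_{t+1}$ are \emph{consecutive}: the single containment $A_i\cap A_\gamma\subseteq A_j$ collapses exactly the $t$-fold and $(t+1)$-fold intersections to the same set, so the disjointness that matters is that of adjacent constraint sets. The hypothesis $n>t$ plays only a non-degeneracy role here; the distinctness count is governed by $m$.
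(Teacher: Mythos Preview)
Your proof is correct and follows essentially the same route as the paper's: both verify the $\alpha'$-constraints by the shift $j\mapsto j+1$ in intersection order, and both derive distinctness by padding an assumed collision $A_i\cap A_\gamma=A_j\cap A_\gamma$ with $t-2$ extra sets (using $m\ge k\ge t+1$) to produce a single set whose cardinality mod $p$ would have to lie in both $L_t$ and $L_{t+1}$. The only cosmetic difference is that the paper phrases the collision as $A_i\cap A_j\cap A_\gamma=A_i\cap A_\gamma$ rather than $A_i\cap A_\gamma\subseteq A_j$, which is equivalent.
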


\begin{proof}
The fact that $\mathcal{F}_\gamma$ is an $[|A_{\gamma}|,p,\alpha^\prime]$-family is easy to check. We need to show that $|\mathcal{F}_{\gamma}|=m-1$, i.e., $A_i\cap A_\gamma\neq A_j\cap A_\gamma$ if $i\neq j$. Suppose it is not the case and $A_i\cap A_{\gamma}=A_j\cap A_{\gamma}$ for some $i\neq j$ which implies $ A_i \cap A_j\cap A_\gamma=A_i\cap A_\gamma$. Then, using the fact that $m\ge k\ge t+1$ and taking pairwise distinct  $A_{r_1}, A_{r_2}, \ldots, A_{r_{t-2}}$ from $\mathcal{F}$ which are different from $A_i,A_j$ and $A_\gamma$, it follows that
\[A_i \cap A_j\cap A_\gamma \cap (\bigcap_{\ell=1}^{t-2} A_{r_\ell}) = A_i\cap A_\gamma \cap  (\bigcap_{\ell=1}^{t-2} A_{r_\ell}).\]
Denoting the left-hand side of the above equation by $U$ and its right-hand side by $V$, we have $|U| \equiv |V| \pmod{p}$. But we have $|U| \pmod{p} \in L_{t+1}$ and $|V|\pmod{p} \in L_t$ which contradicts the fact that $L_t \cap L_{t+1} = \emptyset$. This completes the proof.
\end{proof}

\section{Main Results}
\label{Main}
\subsection{New proofs using the triangular tensor}~\label{New-proofs}
We begin this section with the following lemma, which can be proved easily using the definitions of characteristic vectors and lexicographic order. 

\begin{lemma}\label{order-lemma}
 Let $\mathcal{F}\subseteq 2^{[n]}$, and for $X,Y\in\mathcal{F}$ let $X\le Y$ if and only if $\boldsymbol{v}_{X}\le \boldsymbol{v}_{Y}$, lexicographically from the left-hand side. Then: 
 \begin{itemize}
     \item[(i)] $\le$ is a total order on $\mathcal{F}$,
     \item[(ii)] if $X\subseteq Y$, then $X\le Y$,
     \item[(iii)] if $X\le Y$, $\boldsymbol{v}_X=(x_1,\ldots,x_n)$ and $\boldsymbol{v}_Y=(y_1,\ldots,y_n)$, then $x_1y_1=x_1$.
 \end{itemize}
 \end{lemma}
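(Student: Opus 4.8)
The plan is to isolate the single definition underlying all three parts, namely that $\boldsymbol{v}_X \le \boldsymbol{v}_Y$ lexicographically from the left means that either $\boldsymbol{v}_X = \boldsymbol{v}_Y$, or at the smallest index $i$ where $x_i$ and $y_i$ differ one has $x_i < y_i$ (equivalently $x_i = 0$ and $y_i = 1$). Once this characterization is made precise, each claim reduces to a short deduction.

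For part (i), I would first recall the standard fact that the lexicographic order on $\{0,1\}^n$ (with $0 < 1$ in each coordinate) is a total order: reflexivity and transitivity are immediate, antisymmetry holds because two vectors with $\boldsymbol{v}_X \le \boldsymbol{v}_Y$ and $\boldsymbol{v}_Y \le \boldsymbol{v}_X$ can have no differing coordinate, and totality holds because any two distinct vectors possess a first differing coordinate at which one entry is strictly smaller. Since $X \mapsto \boldsymbol{v}_X$ is a bijection from $2^{[n]}$ to $\{0,1\}^n$, distinct sets in $\mathcal{F}$ have distinct characteristic vectors, so $\le$ on $\mathcal{F}$ is the pullback of this total order along an injection, which is again a total order.

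For part (ii), I would note that $X \subseteq Y$ is equivalent to the coordinatewise inequality $x_i \le y_i$ for all $i$. If $X = Y$ then $X \le Y$ trivially; otherwise let $i$ be the first index with $x_i \ne y_i$. The coordinatewise bound forces $x_i = 0$ and $y_i = 1$, so $\boldsymbol{v}_X < \boldsymbol{v}_Y$ lexicographically by the characterization above, and hence $X \le Y$. For part (iii), the point is to extract information about the first coordinate alone: from $X \le Y$ I would argue $x_1 \le y_1$, since $x_1 > y_1$ would make index $1$ a differing coordinate with $x_1 > y_1$, forcing $\boldsymbol{v}_Y < \boldsymbol{v}_X$ and contradicting $X \le Y$. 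Given $x_1 \le y_1$ with both entries in $\{0,1\}$, the identity $x_1 y_1 = x_1$ is immediate: if $x_1 = 0$ both sides vanish, and if $x_1 = 1$ then $y_1 = 1$ as well, so both sides equal $1$.

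Since every step is a direct unwinding of definitions, there is no substantial obstacle here; the only point requiring care is the consistent use of the "first differing coordinate" description of the lexicographic order. This is precisely what distinguishes it from the coordinatewise partial order, and it is what simultaneously guarantees totality in (i) and licenses the first-coordinate conclusion in (iii).
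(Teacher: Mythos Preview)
Your proposal is correct and is precisely the routine verification the paper has in mind: the paper itself offers no detailed argument, merely noting that the lemma ``can be proved easily using the definitions of characteristic vectors and lexicographic order,'' and your unwinding of the first-differing-coordinate characterization is exactly that.
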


 The following is a new proof of Snevily's theorem with modular restrictions, which is a generalization of the odd-town theorem.

\begin{theorem}[Snevily \cite{Snevily1}]
\label{snevily}
Let \( p \) be a prime number, and  let $\alpha=(L,K)$ where \( L\) and \( K\) are two disjoint subsets of  \(\mathbb{Z}_p\) with $|L| = s$.
Furthermore, let \( \mathcal{F} = \{A_1,\ldots,A_m\} \) be an $[n,p,\alpha]$-family. Then \( |\mathcal{F}| \le \sum_{i=0}^{s} \binom{n-1}{i}. \)
\end{theorem}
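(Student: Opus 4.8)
The plan is to build a single $2$-tensor $T$ on $\mathcal{F}$ that is simultaneously (a) lower-triangular with non-zero diagonal, so that \cref{slice-rank-triangular} gives $sr(T)=|\mathcal{F}|$, and (b) of the product shape appearing in \cref{innerproduct-slice rank} but over only $n-1$ coordinates, so that its slice rank is at most $\sum_{i=0}^{s}\binom{n-1}{i}$. Chaining the two facts then yields $|\mathcal{F}|=sr(T)\le \sum_{i=0}^{s}\binom{n-1}{i}$ at once.

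Concretely, I would first put the lexicographic order of \cref{order-lemma} on $\mathcal{F}$, and let $L=\{l_1,\dots,l_s\}$ be the $s$-element set of admissible pairwise-intersection residues. Writing $\boldsymbol{v}_X^{\,j}$ for the $j$-th coordinate of the characteristic vector of $X$, I define
\[
T(X,Y)=\prod_{l\in L}\Big(\sum_{j=2}^{n}\boldsymbol{v}_X^{\,j}\boldsymbol{v}_Y^{\,j}+\boldsymbol{v}_X^{\,1}-l\Big),
\]
so that each of the $s$ factors is an inner product over the last $n-1$ coordinates plus a quantity $\boldsymbol{v}_X^{\,1}-l$ that depends only on $X$. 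The design principle is that the first coordinate of $Y$ never occurs as a genuine variable; this is exactly what converts the usual $\binom{n}{i}$ into $\binom{n-1}{i}$.

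For the slice-rank bound, since $\boldsymbol{v}_Y$ is a $\{0,1\}$-vector, $T$ is precisely the tensor of \cref{innerproduct-slice rank} with ambient dimension $m=n-1$, number of factors equal to $s$, and $f_l(X)=\boldsymbol{v}_X^{\,1}-l$, so that example gives $sr(T)\le \sum_{i=0}^{s}\binom{n-1}{i}$ directly. For triangularity, I would invoke part (iii) of \cref{order-lemma}: when $X\le Y$ it gives $\boldsymbol{v}_X^{\,1}\boldsymbol{v}_Y^{\,1}=\boldsymbol{v}_X^{\,1}$, whence $\sum_{j=2}^{n}\boldsymbol{v}_X^{\,j}\boldsymbol{v}_Y^{\,j}+\boldsymbol{v}_X^{\,1}=\sum_{j=1}^{n}\boldsymbol{v}_X^{\,j}\boldsymbol{v}_Y^{\,j}=|X\cap Y|\bmod p$, so that $T(X,Y)=\prod_{l\in L}\big(|X\cap Y|-l\big)$ on the whole closed upper triangle $X\le Y$. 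On the diagonal this equals $\prod_{l\in L}(|X|-l)\neq 0$ because the set sizes avoid $L$, while for $X<Y$ the factor indexed by $l=|X\cap Y|\bmod p\in L$ vanishes, giving $T(X,Y)=0$. By \cref{diagonal-lower triangular 2-tensor}, $T$ is lower-triangular with non-zero diagonal, and \cref{slice-rank-triangular} gives $sr(T)=|\mathcal{F}|$, closing the chain.

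The step I expect to carry the weight is the reduction identity coming from \cref{order-lemma}(iii): verifying that the defining expression for $T$ coincides with $\prod_{l\in L}(|X\cap Y|-l)$ throughout $X\le Y$. This single identity does double duty, since it is what forces the vanishing strictly above the diagonal while leaving the diagonal untouched, and it is also what legitimizes applying both \cref{innerproduct-slice rank} and \cref{slice-rank-triangular} to the very same tensor. Everything else, namely that the lexicographic order is total and that $T$ literally has the product form required by the Example, is immediate from the preliminary lemmas.
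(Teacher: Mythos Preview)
Your proposal is correct and is essentially the paper's own proof: both impose the lexicographic order of \cref{order-lemma}, define the same tensor $\prod_{i}\big(\sum_{j\ge 2}x_jy_j + x_1 - l_i\big)$, use part~(iii) to identify it with $\prod_i(|X\cap Y|-l_i)$ on $X\le Y$ (giving lower-triangularity with non-zero diagonal), and then apply \cref{innerproduct-slice rank} over $n-1$ coordinates together with \cref{slice-rank-triangular}. The only cosmetic difference is that the paper first writes down the full-inner-product diagonal tensor $T$ and then passes to the modified tensor $T'$, whereas you write $T'$ down directly.
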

\begin{proof}
Let $L=\{l_1,\ldots,l_s\}$, and  let $T(X,Y)$ be the following 2-tensor from $\mathcal{F} \times \mathcal{F}$ to $\mathbb{Z}_p$ where $\boldsymbol{v}_X=(x_j)_{j=1}^n$ and $\boldsymbol{v}_Y=(y_j)_{j=1}^n$ are the characteristic vectors of the sets $X, Y \in \mathcal{F}$, respectively:
\[
T(X, Y) = \prod_{i=1}^s \left(( \sum_{j=1}^n x_j y_j) - l_i\right) = \prod_{i=1}^s \left(x_1y_1 + (\sum_{j=2}^n x_jy_j) - l_i\right).
\]

Using the facts that $|X|=\sum_{j=1}^nx_j^2=\sum_{j=1}^nx_j$,  $|X\cap Y|=\sum_{j=1}^nx_jy_j$ and the fact that $L$ and $K$ are disjoint, it follows that $T$ is a diagonal tensor with non-zero diagonal entries. This fact leads us to define a new tensor $T'$ using the right-hand side of the above equation:
\[
T'(X,Y) = \prod_{i=1}^s \left(x_1 + (\sum_{j=2}^n x_jy_j) - l_i \right).
\]

Assuming that $\mathcal{F}$ is totally ordered by $\le$ as in the above lemma, by the same lemma $x_1y_1=x_1$ whenever $X\le Y$, and hence $T'(X,Y)=T(X,Y)$ whenever $X\le Y$. This implies that $T'$ is lower-triangular with non-zero diagonal entries. Rearranging the terms of $T'(X,Y)$, we may write
\[
T'(X,Y) =  \prod_{i=1}^s \left( (\sum_{j=2}^n x_j y_j) + f_i(X) \right) 
\]
where $f_i(X)$ is a function depending only on $X$. Now, from \cref{innerproduct-slice rank}, we have    
\[
\operatorname{sr}(T')\leq \sum_{i=0}^{s} \binom{n - 1}{i},
\]
and hence applying \cref{slice-rank-triangular} finishes the proof. 
\end{proof}
\begin{remark}
    Snevily~\cite{snevily2003sharp} has conjectured that the stronger bound $|\mathcal{F}|\le \binom{n}{s}$ is true in the above theorem. 
\end{remark}
In the sequel, we give a new proof of the following theorem, which appears as Lemma~2.7 in~\cite{liu2024snevily}.
\begin{theorem}[Liu \emph{et al.}~\cite{liu2024snevily}]
    Let $\mathcal{A} = \{ A_1,\ldots,A_m\}$, $\mathcal{B} = \{B_1,\ldots,B_m\}$ be two families of subsets of $[n]$. Furthermore,
    Let $\mathcal{L} = \{l_1,\ldots,l_k\}$ be a set of non-negative integers. Suppose $|A_r \cap B_s| \in \mathcal{L}$ for $r\neq s$, $A_r \subseteq B_r$, and $|A_r| \notin \mathcal{L}$ for $1\leq r \leq m$, then $m \leq \sum_{i=0}^k {n-1 \choose i}$.
\end{theorem}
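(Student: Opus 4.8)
\textbf{The plan is to} mimic the triangular slice-rank proof of Snevily's theorem (\cref{snevily}), but now with two families $\mathcal{A}$ and $\mathcal{B}$ playing the roles previously played by a single family paired with itself. First I would form the $2$-tensor $T$ on pairs $(r,s)$ indexed by $[m]\times[m]$ defined by
\[
T(r,s)=\prod_{i=1}^{k}\Bigl(\bigl(\textstyle\sum_{j=1}^{n} \boldsymbol{v}_{A_r}^{j}\,\boldsymbol{v}_{B_s}^{j}\bigr)-l_i\Bigr),
\]
so that the inner sum computes $|A_r\cap B_s|$. The hypotheses immediately give the diagonal/off-diagonal dichotomy: for $r=s$ we have $|A_r\cap B_r|=|A_r|$ (using $A_r\subseteq B_r$), which lies outside $\mathcal{L}$, so each factor is nonzero and $T(r,r)\neq 0$; for $r\neq s$ we have $|A_r\cap B_s|\in\mathcal{L}$, so some factor vanishes and $T(r,s)=0$. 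Thus $T$ is a diagonal tensor with nonzero diagonal entries, and its slice rank equals $m$ by \cref{slicerank}.

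\textbf{Next I would introduce the total order and triangulate.} The subtlety compared to Snevily is that the diagonal pairs the set $A_r$ with the distinct set $B_r$, so I cannot simply order one family; instead I would order the index set $[m]$ by comparing the characteristic vectors $\boldsymbol{v}_{A_r}$ lexicographically (applying \cref{order-lemma} to the family $\mathcal{A}$, relabelling indices if necessary so that $r\le s$ corresponds to $\boldsymbol{v}_{A_r}\le\boldsymbol{v}_{A_s}$). To make the first coordinate behave, I would replace the first variable in each factor, writing $\boldsymbol{v}_{A_r}=(x_1,\ldots,x_n)$ and $\boldsymbol{v}_{B_s}=(y_1,\ldots,y_n)$ and defining
\[
T'(r,s)=\prod_{i=1}^{k}\Bigl(x_1+\bigl(\textstyle\sum_{j=2}^{n} x_j y_j\bigr)-l_i\Bigr).
\]
The key step is to show $T'(r,s)=T(r,s)$ whenever $r\le s$, which reduces to $x_1 y_1=x_1$ in that regime. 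This is exactly item (iii) of \cref{order-lemma}, \textbf{provided} I can guarantee that $\boldsymbol{v}_{A_r}\le\boldsymbol{v}_{B_s}$ lexicographically when $r\le s$; since $A_r\subseteq B_r$ gives $\boldsymbol{v}_{A_r}\le\boldsymbol{v}_{B_r}$ by item (ii), and $\boldsymbol{v}_{A_r}\le\boldsymbol{v}_{A_s}\le\boldsymbol{v}_{B_s}$ by the ordering and inclusion, the first-coordinate relation $x_1 y_1=x_1$ holds. Consequently $T'$ agrees with $T$ on and below the diagonal, so $T'$ is lower-triangular with the same nonzero diagonal entries.

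\textbf{Finally I would bound the slice rank of $T'$.} Since the first variable $x_1$ has been detached from $y_1$, each factor of $T'$ has the form $(\sum_{j=2}^{n} x_j y_j)+f_i(r)$ where $f_i(r)=x_1-l_i$ depends only on $r$ (i.e.\ only on $A_r$); this is precisely the shape required by \cref{innerproduct-slice rank} with the ambient dimension reduced to $m-1=n-1$ coordinates $y_2,\ldots,y_n$. That example yields $\operatorname{sr}(T')\le\sum_{i=0}^{k}\binom{n-1}{i}$. Combining this with \cref{slice-rank-triangular}, which certifies that the lower-triangular tensor $T'$ with nonzero diagonal has slice rank exactly $m$, gives $m\le\sum_{i=0}^{k}\binom{n-1}{i}$.

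\textbf{The main obstacle} I anticipate is the triangulation bookkeeping: unlike the single-family case, the diagonal entry $T(r,r)$ couples $A_r$ with $B_r$ rather than a set with itself, so I must carefully verify that ordering by $\mathcal{A}$ alone still forces $x_1 y_1=x_1$ below the diagonal. The chain of lexicographic inequalities $\boldsymbol{v}_{A_r}\le\boldsymbol{v}_{A_s}\le\boldsymbol{v}_{B_s}$ (using the order on $\mathcal{A}$ together with $A_s\subseteq B_s$) is what makes this work, and confirming that \cref{order-lemma}(iii) applies to the mixed pair $(\boldsymbol{v}_{A_r},\boldsymbol{v}_{B_s})$ rather than to two vectors from the same family is the one place where the argument genuinely differs from the proof of \cref{snevily} and therefore deserves the most care.
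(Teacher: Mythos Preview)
Your proposal is correct and follows essentially the same approach as the paper: the same diagonal tensor $T$, the same modified tensor $T'$ with $x_1y_1$ replaced by $x_1$, the same lexicographic order on $\mathcal{A}$, and the same chain $\boldsymbol{v}_{A_r}\le\boldsymbol{v}_{A_s}\le\boldsymbol{v}_{B_s}$ (via \cref{order-lemma}(ii) and (iii)) to verify $x_1y_1=x_1$ below the diagonal. The only slip is cosmetic: in the final slice-rank bound you wrote ``$m-1=n-1$ coordinates'' where you meant simply $n-1$.
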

\begin{proof}
The proof is very similar to the proof of the previous theorem. Let $T$ be a $2$-tensor form $\mathcal{A}\times \mathcal{A}$ to $\mathbb{R}$ such that for every $(r,s)\in [m]\times [m]$,
    \[
T(A_r, A_s) = \prod_{i=1}^k \left(( \sum_{j=1}^n x_j y_j) - l_i\right) = \prod_{i=1}^k\left(x_1y_1 + (\sum_{j=2}^n x_jy_j) - l_i\right)
\]
 where $(x_j)_{j=1}^n$ and $(y_j)_{j=1}^n$ are the characteristic vectors of $A_r\in \mathcal{A}$ and $B_s\in \mathcal{B}$, respectively.  Using the facts that $|A_r|=\sum_{i=1}^nx_i^2=\sum_{i=1}^nx_i$,  $|A_r\cap B_s|=\sum_{i=1}^nx_iy_i$ and the facts that $|A_r \cap B_s| \in \mathcal{L}$ for $r\neq s$,  and $|A_r| \notin \mathcal{L}$ for $1\leq r \leq m$, it follows that $T$ is a diagonal tensor with non-zero diagonal entries. 

Now, the same as the proof of the previous theorem, we define a new tensor $T'$ where
\[
T'(A_r,A_s) = \prod_{i=1}^k \left(x_1 + (\sum_{j=2}^n x_jy_j) - l_i \right).
\] Assuming that $\mathcal{A}$ is totally ordered by $\le$ as in \cref{order-lemma}, by the same lemma $x_1y_1=x_1$ whenever $A_r\le A_s\le B_s$ (notice that $A_s\subseteq B_s$), and hence $T'(A_r,A_s)=T(A_r,A_s)$ whenever $A_r\le A_s$.  The rest of the proof is completely similar to the proof of the theorem of Snevily.

\end{proof}

Here is a new proof of the Frankl-Wilson theorem.

\begin{theorem}[Frankl-Wilson \cite{FranklWilson1981}]
 Let \( L=\{l_1,\ldots,l_s\}\)  be a set of $s$ non-negative integers, and let \( \mathcal{F} = \{A_1,\ldots,A_m\} \) be a collection of subsets of $[n]$ such that $|A_i \cap A_j| \in L$ for $i \neq j$.
Then \( |\mathcal{F}| \le \sum_{i=0}^{s} \binom{n}{i}. \)   
\end{theorem}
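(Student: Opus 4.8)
The plan is to mimic the triangular slice-rank argument used for Snevily's theorem, with the key difference that here there is no separate constraint on the set sizes $|A_i|$; the sizes $|A_i|$ themselves may lie in $L$, so the diagonal entries of the natural tensor need not be nonzero. I would therefore expect the bound to be $\sum_{i=0}^{s}\binom{n}{i}$ rather than $\sum_{i=0}^{s}\binom{n-1}{i}$, since we cannot factor out a distinguished first coordinate $x_1$ to drop from $n$ to $n-1$.

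First I would define, for $X,Y\in\mathcal{F}$ with characteristic vectors $\boldsymbol{v}_X=(x_j)_{j=1}^n$ and $\boldsymbol{v}_Y=(y_j)_{j=1}^n$, the $2$-tensor
\[
T(X,Y)=\prod_{i=1}^s\left(\Bigl(\sum_{j=1}^n x_jy_j\Bigr)-l_i\right).
\]
For $X\neq Y$ the value $\sum_j x_jy_j=|X\cap Y|$ lies in $L$, so one of the factors vanishes and $T(X,Y)=0$; on the diagonal $\sum_j x_jy_j=|X|$, which is \emph{not} assumed to lie in $L$, so here the diagonal entries may be zero and $T$ is not guaranteed to be triangular with nonzero diagonal. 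The hard part will be handling exactly this: the clean size constraint that made the Snevily tensor diagonal-nonzero is absent. The natural fix is to observe that the slice rank of $T$ already bounds something useful, or alternatively to reduce to the case where all $|A_i|$ are equal; but the cleanest route is to note that a diagonal entry $T(X,X)$ vanishes precisely when $|X|\in L$, and to argue that only the sets $X$ with $T(X,X)\neq 0$ contribute to a full-rank triangular block.

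Concretely, I would totally order $\mathcal{F}$ by the lexicographic order of \cref{order-lemma} and pass to $T'(X,Y)=\prod_{i=1}^s(\sum_{j=1}^n x_jy_j-l_i)$ viewed directly, without peeling off $x_1$: here, since there is no size hypothesis forcing a distinguished coordinate, I keep all $n$ coordinates, so expanding via $y_j^2=y_j$ and \cref{innerproduct-slice rank} (applied with $l=s$ and the full $n$ coordinates, i.e.\ the version without the $x_1$-reduction) gives $\operatorname{sr}(T)\le\sum_{i=0}^s\binom{n}{i}$. Because $T$ itself is already lower-triangular (for $X\le Y$, $X\neq Y$ we have $|X\cap Y|\in L$ so $T(X,Y)=0$), no auxiliary $T'$ is even needed for the off-diagonal vanishing. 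The remaining obstacle, the possibly-zero diagonal entries, I would resolve by restricting $T$ to the subfamily $\mathcal{F}_0=\{X\in\mathcal{F}:|X|\notin L\}$, on which $T$ is triangular with nonzero diagonal so that \cref{slice-rank-triangular} forces $|\mathcal{F}_0|\le\operatorname{sr}(T)\le\sum_{i=0}^s\binom{n}{i}$, and then bounding $|\mathcal{F}\setminus\mathcal{F}_0|$ separately. I expect the genuinely delicate point to be showing that the sets of size in $L$ contribute nothing extra to the count—most likely by a standard pigeonhole or parity observation, or by absorbing them into $L$ via a shifted tensor—so that the final bound $|\mathcal{F}|\le\sum_{i=0}^s\binom{n}{i}$ holds for all of $\mathcal{F}$.
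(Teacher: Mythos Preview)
Your proposal has a genuine gap. You correctly identify the obstacle---the natural tensor $T(X,Y)=\prod_i(\sum_j x_jy_j-l_i)$ has $T(X,X)=0$ whenever $|X|\in L$---but your resolution is incomplete: restricting to $\mathcal{F}_0=\{X:|X|\notin L\}$ yields only $|\mathcal{F}_0|\le\sum_{i=0}^s\binom{n}{i}$, and you then hand-wave the bound on $\mathcal{F}\setminus\mathcal{F}_0$ with vague hopes (``pigeonhole or parity'', ``shifted tensor''). There is no cheap separate bound on the sets with $|X|\in L$ that combines additively with the bound on $\mathcal{F}_0$ to recover $\sum_{i=0}^s\binom{n}{i}$; a unified argument is needed. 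Note also that the lexicographic order is a red herring here: your $T$ vanishes for \emph{all} $X\ne Y$, so it is already diagonal regardless of order, and ordering buys you nothing.

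The paper resolves the obstacle in one stroke by modifying the tensor rather than splitting the family. It sets
\[
T(X,Y)=\prod_{i=1}^s\Bigl(\sum_{j=1}^n x_jy_j-l_i+\delta_{l_i,\,|Y|}\Bigr)
\]
and orders $\mathcal{F}$ by cardinality, $X\le Y\iff |X|\le|Y|$ (ties broken arbitrarily). On the diagonal each factor is $|X|-l_i+\delta_{l_i,|X|}$, which equals $|X|-l_i\ne 0$ when $l_i\ne|X|$ and equals $1$ when $l_i=|X|$, so every diagonal entry is nonzero. For $X\ne Y$ with $|X|\le|Y|$, write $|X\cap Y|=l_j\in L$; necessarily $l_j<|Y|$ (equality would force $Y\subseteq X$ and, with $|X|\le|Y|$, hence $X=Y$), so $\delta_{l_j,|Y|}=0$ and the $j$-th factor vanishes. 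Thus $T$ is lower-triangular with nonzero diagonal, and since the added $\delta$-term depends only on $Y$, the expansion of \cref{innerproduct-slice rank} still gives $\operatorname{sr}(T)\le\sum_{i=0}^s\binom{n}{i}$. The two ingredients you were missing are precisely this Kronecker-delta correction and the switch from the lexicographic order to the size order.
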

\begin{proof}
Let $\le$ be a binary relation on $\mathcal{F}$ satisfying
\begin{itemize}
    \item[(i)] $X\le X$, and
    \item[(ii)] if $X\neq Y$, then $X\le Y$ if either $|X|<|Y|$ or $|X|=|Y|$ and $v_X<v_Y$ lexicographically.  
\end{itemize}
Then, trivially, $\le$ is a total order on $\mathcal{F}$. 

Now, let \( T(X, Y) \) be the following 2-tensor from \( \mathcal{F} \times \mathcal{F} \) to \( \mathbb{R} \)
    \[
        T(X, Y) = \prod_{i=1}^s \left( (\sum_{j=1}^n x_j y_j) - l_i + \delta_{l_i, |Y|} \right)
    \]  
where \( \boldsymbol{v}_X = (x_j)_{j=1}^n \) and \( \boldsymbol{v}_Y = (y_j)_{j=1}^n \) are the characteristic vectors of the sets \( X, Y \in \mathcal{F} \), respectively, and \( \delta_{l_i,|Y|} \) is the Kronecker delta function, i.e., $\delta_{l_i,|Y|}=1$ if and only if $|Y|=l_i$. We claim that \( T \) is lower-triangular with non-zero diagonal entries with respect to the total order $\le$. 
To prove the claim, firstly, we notice that for every $X\in \mathcal{F}$, whenever $|X|\neq l_i$, then $\sum_{j=1}^n x_j x_j - l_i + \delta_{l_i, |X|}=|X|-l_i\neq 0$, and whenever $|X|=l_i$, then $\sum_{j=1}^n x_j x_j - l_i + \delta_{l_i, |X|}=1\neq 0$, and hence $T(X,X)\neq 0$ for every $X\in\mathcal{F}$ which means that all diagonal entries are non-zero. Secondly, let $X< Y$. Then by the fact that $|X\cap Y|\in L$, we have $\sum_{j=1}^n x_j y_j - l_i=0$ for some $i$. But for the same $i$, $\delta_{l_i, |Y|}=0$ as when $X<Y$, then either $|X|<|Y|$ or $|X|=|Y|$ and in both cases since $|X\cap Y|=l_i$, we have $|Y|>l_i$. Thus we conclude that when $X<Y$, then for some $i$, $\sum_{j=1}^n x_j y_j - l_i + \delta_{l_i, |Y|}=0$, 
and hence $T(X,Y)=0$. Now, the result follows from Theorem~\ref{diagonal-lower triangular 2-tensor} and Example~\ref{innerproduct-slice rank}.
\end{proof}
\begin{remark}
 Snevily~\cite{snevily2003sharp} has shown that when the set $L$ in the above theorem consists of only positive integers, then \( |\mathcal{F}| \le \sum_{i=0}^s\binom{n-1}{i}. \) 
\end{remark}

Heged{\"u}s in~\cite{hegedus2024upper} has shown that the bound given in the theorem of Frankl-Wilson can be improved if the set $\mathcal{F}$ is equipped with a total order satisfying some extra conditions. Here we provide an alternative proof which is very similar to our proof of Frankl-Wilson.

\begin{theorem}[Heged{\"u}s]
With the assumptions as the theorem of Frankl-Wilson, suppose there exists $1\le r\le m$ such that
\begin{itemize}
    \item[(i)]$n\in A_i$ for every $1\le i\le r$,
    \item[(ii)]$n\notin A_i$ for $i>r$, and
    \item[(iii)]$|A_i|<|A_j|$ for every $1\le i<j\le m$. 
\end{itemize}
Then \( |\mathcal{F}| \le \sum_{i=0}^s\binom{n-1}{i}. \) 
\begin{proof}
    With the assumptions as above, we have \(x_{n}y_{n} = y_{n}\) whenever $X \leq Y$ where \( \boldsymbol{v}_X = (x_j)_{j=1}^n \) and \( \boldsymbol{v}_Y = (y_j)_{j=1}^n \) are the characteristic vectors of the sets \( X, Y \in \mathcal{F} \), respectively. Now, the tensor 
    \[
    T(X,Y) = \prod_{i=1}^{|L|} \left( (\sum_{j=1}^{n-1} x_j y_j) + y_n - l_i + \delta_{l_i, |Y|} \right)
    \]
    is triangular with which using the same arguments one can get the claimed bound.
    \end{proof}
\end{theorem}
\subsection{Generalization of the reverse odd-town theorem}

When $n$ is even and $p=2$, the bound given for the reverse odd-town problem by the theorem of Snevily is $n$ while the correct answer is $n-1$. We improve the theorem of Snevily in this special case to get a tight bound for the the reverse odd-town problem.  

\begin{theorem}
    \label{sharper}
    With the assumptions of \cref{snevily}, and additionally assuming \(K = \{0\}\) and \(L = -L\), the following holds:  
    if \(p \nmid n\), then  
    \(
        |\mathcal{F}| \le \sum_{i=0}^{|L|} \binom{n-1}{i},
    \)
    whereas if \(p \mid n\), the bound improves to  
    \(
        |\mathcal{F}| \le \sum_{i=0}^{|L|} \binom{n-2}{i}.
    \)
\end{theorem}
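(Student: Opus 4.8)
The plan is to split on whether $p \mid n$ and to reduce both cases to \cref{snevily}. Throughout I read the hypotheses operationally, as the proof of \cref{snevily} forces: the constraint $K=\{0\}$ means $|A_r|\equiv 0\pmod p$ for every $r$, while the symmetric set $L$ (with $|L|=s$) governs the pairwise intersections, $|A_r\cap A_s|\bmod p\in L$ for $r\neq s$, and $0\notin L$ since $K$ and $L$ are disjoint. When $p\nmid n$ there is nothing new to prove: the family $\mathcal F$ satisfies exactly the hypotheses of \cref{snevily}, so that theorem gives $|\mathcal F|\le\sum_{i=0}^{s}\binom{n-1}{i}=\sum_{i=0}^{|L|}\binom{n-1}{i}$ directly. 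All the content is therefore in the case $p\mid n$, where the goal is to shave one more unit off the top index of the binomials.

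For $p\mid n$ the idea is to use the Complement Lemma to normalise $\mathcal F$ so that a fixed ground-set element, say $n$, lies in none of the sets, and then to apply \cref{snevily} to the resulting family on the smaller ground set $[n]\setminus\{n\}$ of size $n-1$. Here the profile is $(0,L)$ with $L=-L$ and $p\mid n$, so the Complement Lemma applies and lets us replace any single $A_i$ by $A_i^c$ while staying inside the class of $[n,p,\alpha]$-families. The first thing to record is that such a replacement never decreases $|\mathcal F|$: if $A_i^c$ already belonged to $\mathcal F$ we would have two members with $|A_i\cap A_i^c|=0\equiv 0\pmod p$, contradicting $0\notin L$; hence $A_i^c\notin\mathcal F$ and the size stays $m$.

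With this in hand I would process the sets one at a time: whenever some member still contains $n$, complement it. Each such step strictly decreases the number of members containing $n$ (complementation touches only the set being complemented, and the new set omits $n$), so after finitely many steps we obtain an $[n,p,\alpha]$-family $\mathcal F'$ of size $m$ in which no set contains $n$. Every member of $\mathcal F'$ is then a subset of $[n-1]$ with the same cardinalities and pairwise-intersection sizes, so $\mathcal F'$ is an $[n-1,p,\alpha]$-family of size $m$. Applying \cref{snevily} with $n-1$ in place of $n$ yields $m\le\sum_{i=0}^{s}\binom{n-2}{i}$, as required. Equivalently, in the triangular tensor of the proof of \cref{snevily} the condition $x_n=y_n=0$ deletes the $n$-th coordinate, leaving a degree-$\le s$ polynomial in the $n-2$ variables $y_2,\dots,y_{n-1}$, whose slice rank is at most $\sum_{i=0}^{s}\binom{n-2}{i}$.

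The only real work is the normalisation step, so that is where I expect the main obstacle to lie. Everything rests on two facts that must be checked jointly: that the Complement Lemma keeps us in the right family (which needs precisely $p\mid n$ and $L=-L$) and that no admissible family can contain a complementary pair (which needs $0\notin L$). A purely tensor-internal argument—attempting to remove the second coordinate directly from the hyperplane relation $\sum_j y_j=0$ coming from $|Y|\equiv 0$—is awkward, since that relation involves $y_1$, the very coordinate the lexicographic triangulation has already absorbed into the $X$-dependent part; routing the second reduction through complementation is what keeps the argument clean.
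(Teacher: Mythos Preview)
Your proposal is correct and follows essentially the same route as the paper: apply \cref{snevily} directly when $p\nmid n$, and when $p\mid n$ use the Complement Lemma to replace every member containing $n$ by its complement, obtaining an $[n-1,p,(0,L)]$-family of the same size to which \cref{snevily} then applies. You are in fact more careful than the paper's write-up, which does not explicitly justify that the size is preserved (your observation that a complementary pair would force $0\in L$) or spell out the iteration; those details are genuinely needed and your argument for them is clean.
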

\begin{proof}
When \(p \nmid n\), the bound in \cref{snevily} applies directly, yielding  
\(|\mathcal{F}| \le \sum_{i=0}^{|L|} \binom{n-1}{i}\). Thus, it remains to show that if \(p \mid n\), then the bound can be improved to the claimed one.

Let $\alpha=(0,L)$, and let \(\mathcal{F}^\prime\) be a family obtained from \(\mathcal{F}\) by replacing each element of \(\mathcal{F}\) containing $n$ with its complement in $[n]$. Applying the complement lemma, \(\mathcal{F}^\prime\) is an \([n-1,p,\alpha]\)-family. 
Applying the above theorem we have  
\[
|\mathcal{F}| = |\mathcal{F}'| \le \sum_{i=0}^{|L|} \binom{n-2}{i}.
\]
\end{proof} 

The following theorem is a generalization of reverse odd-town problem where we are concerned about the higher order intersections and not just about the pairwise intersections. The theorem improves the bounds of $g_{(0,0,\star)}(n)$ given in~\cite[Section~5.6]{ONeill2025}.
\begin{theorem}
\label{Generalized-reverse-odd-town}
    Let $p$ be a prime number and $k>2$ be an integer. Furthermore let  $\alpha = (0, \ldots, 0, L)$ be a $k$-intersection profile where $L \subseteq \mathbb{Z}_p$, $0\notin L$, and $L = -L$. Then for any $[n,p,\alpha]$-family $\mathcal{F}$, the following holds: 
    \begin{itemize}
    \item[(i)] if $p | n$, then
    \[
        |\mathcal{F}| \leq \sum_{i=0}^{|L|} {\left\lfloor\dfrac{n}{2^{k-2}}\right\rfloor-2 \choose i} + k-2,
    \]    
    
    \item[(ii)]  if $p \nmid n$, then we have the weaker bound
    \[
        |\mathcal{F}| \leq \sum_{i=0}^{|L|} {\left\lfloor\dfrac{n}{2^{k-3}}\right\rfloor-2 \choose i} + k-2.
    \]
    \end{itemize}
\end{theorem}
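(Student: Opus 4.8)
The plan is to iterate the Trace Lemma $k-2$ times to reduce the $k$-intersection profile $\alpha=(0,\ldots,0,L)$ down to the $2$-intersection profile $(0,L)$, and then apply \cref{sharper} to the resulting family. First I would observe that since $0\notin L$ and $L\subseteq\mathbb{Z}_p\setminus\{0\}$, we have $L_t\cap L_{t+1}=\emptyset$ at every step where $L_t=\{0\}$ and $L_{t+1}$ is the next coordinate of the profile (either $\{0\}$ or $L$); this is exactly the hypothesis needed to invoke the Trace Lemma. Concretely, picking any $A_\gamma\in\mathcal{F}$ and tracing onto $A_\gamma$, the Trace Lemma produces an $[|A_\gamma|,p,(0,\ldots,0,L)]$-family of size $m-1$ with a profile shortened by one coordinate; each such step costs us one set (the $+1$ accumulates to $k-2$ in the final bound) and replaces the ground set $[n]$ by the smaller ground set $A_\gamma$.

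The main quantitative point — and the step I expect to be the real obstacle — is controlling how the ground set shrinks under each trace. After tracing onto $A_\gamma$ we are restricted to a ground set of size $|A_\gamma|$, and to get the explicit $\lfloor n/2^{k-2}\rfloor$ (resp.\ $\lfloor n/2^{k-3}\rfloor$) in the statement I need a lower bound guaranteeing that some set in the family is \emph{small}, so that I may instead trace onto a \emph{complement} of size roughly $n/2$, halving the ambient dimension at each stage. The idea is: among the sets in an $[n,p,(0,\ldots,0)]$-style family (all sizes $\equiv 0$), by a pigeonhole/averaging argument over the $n$ coordinates there must exist a set $A_\gamma$ with $|A_\gamma|\le n/2$, or equivalently a set whose complement has size at most about $n/2$; tracing onto that small set (or onto a complement, justified by the Complement Lemma since $p\mid n$ and $L=-L$) reduces the dimension from $n$ to at most $\lfloor n/2\rfloor$. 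Repeating this reduction through the $k-2$ trace steps yields an ambient size of at most $\lfloor n/2^{k-2}\rfloor$ in case (i). The $\lfloor\cdot\rfloor$ and the $-2$ in the binomial upper index will come from carefully tracking the floor at each halving together with the final application of \cref{sharper}, which already carries a $\binom{n-2}{i}$ when $p\mid n$.

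For case (ii), where $p\nmid n$, the Complement Lemma is unavailable for the very first reduction because it requires $p\mid n$, so I would use the halving argument one fewer time (giving $2^{k-3}$ instead of $2^{k-2}$) and then finish with the weaker $\binom{n-1}{i}$ branch of \cref{sharper}; alternatively, one traces down to a ground set where divisibility can be restored. The delicate bookkeeping will be verifying at each intermediate stage that (a) the intersection conditions $L=-L$ and $0\notin L$ are preserved so that both the Trace Lemma and the Complement Lemma remain applicable to the shortened profile, and (b) the floor functions compose correctly, i.e.\ that $\lfloor\lfloor n/2\rfloor/2\rfloor=\lfloor n/4\rfloor$ and so on, so that the final exponent of $2$ matches the number of halving steps. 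Once the ambient dimension is bounded by $\lfloor n/2^{k-2}\rfloor$ and the profile is reduced to $(0,L)$ with size $m-(k-2)$, \cref{sharper} gives $m-(k-2)\le\sum_{i=0}^{|L|}\binom{\lfloor n/2^{k-2}\rfloor-2}{i}$, which rearranges to the stated bound.
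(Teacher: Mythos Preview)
Your plan matches the paper's proof. Two small clarifications: the paper does not use any pigeonhole or averaging step --- once $p\mid n$, the Complement Lemma lets you replace \emph{every} $A\in\mathcal{F}$ with $|A|>n/2$ by $A^c$, so without loss of generality all members of $\mathcal{F}$ have size at most $n/2$ and you may trace onto any one of them; iterating this yields the $\lfloor n/2^{k-2}\rfloor$. For case~(ii) the paper follows exactly your ``alternative'': trace once onto an arbitrary $A_1$ (no halving, since the Complement Lemma is unavailable), arriving at an $[|A_1|,p,\alpha']$-family with a $(k{-}1)$-profile and $p\mid|A_1|$, then invoke case~(i); this is what produces the $-2$ in the binomial upper index, so your first suggestion for (ii) (finishing with the $\binom{\cdot-1}{i}$ branch of \cref{sharper}) would yield a bound slightly weaker than the one stated.
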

\begin{proof}
     When $p \mid n$, then by the Complement Lemma, for any $A \in \mathcal{F}$ with $|A| > \frac{n}{2}$,  the family $\mathcal{F}'$ obtained by  replacing $A$ with $A^c$ in $\mathcal{F}$ is an $[n,p,\alpha]$-family.
    Hence, without loss of generality, we may assume that all the sets in $\mathcal{F}$ are of size at most $n/2$. Now, let $A$ be any set in $\mathcal{F}$, and let $\mathcal{F}' = \{A \cap B : B \in \mathcal{F} \setminus \{A\}\}$. Then, by the Trace Lemma, the family $\mathcal{F}'$ is an  $[|A|,p,\alpha']$-family, where $\alpha' = (0,\ldots,0,L)$ is a $(k-1)$-intersection profile, and $|A| \equiv 0 \pmod{p}$.
    Moreover, $|\mathcal{F}'| = |\mathcal{F}| - 1$ and $|A| \leq \lfloor n/2 \rfloor$. Repeating this argument recursively, the problem gets reduced to bounding the size of the $[n',p,(0,L)]$-family $\mathcal{G}$, where $n' \le \left\lfloor \dfrac{n}{2^{k-2}} \right\rfloor$. But, then applying \cref{sharper} we have
    \[
        |\mathcal{G}| \leq \sum_{i=0}^{|L|} \binom{\left\lfloor \dfrac{n}{2^{k-2}} \right\rfloor - 2}{i},
    \] 
    and hence
    \[
        |\mathcal{F}| \leq \sum_{i=0}^{|L|} \binom{\left\lfloor \dfrac{n}{2^{k-2}} \right\rfloor - 2}{i} + k - 2.
    \] 
    Now, we prove the case where $p\nmid n$. Let $\mathcal{F}=\{A_1,\ldots,A_m\}$ be an $[n,p,\alpha]$-family.  Then, using the Trace Lemma
    \[
        \mathcal{F}_1=\{A_2\cap A_1,\ldots,A_m\cap A_1\}
    \]
    is an $[|A_1|,p,\alpha']$-family of size $m-1$ where $\alpha'=(0,0,\ldots,L)$ is a $(k-1)$-intersection profile . But, then $p\mid|A_1|$, and hence the result follows from the previous case.
\end{proof}

\bibliographystyle{plain}
\bibliography{refs}
\end{document}